 \newtheorem{theorem}{Theorem}[section]
 \newtheorem{lemma}[theorem]{Lemma}
 \newtheorem{corollary}[theorem]{Corollary}
\begin{document}

\title{A note on Diophantine approximation with Gaussian primes}
\author{Stephan Baier}
\address{Stephan Baier, Jawaharlal Nehru University, School of Physical Sciences,
Delhi 11067, India}

\email{sbaier@math.tifr.res.in}

\subjclass[2000]{11J71,11N36,11J25}

\maketitle

{\bf Abstract:} We investigate the distribution of $p\theta$ modulo 1, where
$\theta$ is a complex number which is not contained in $\mathbb{Q}(i)$, and $p$ runs over the Gaussian primes. 

\section{Introduction}
Let $\theta\in \mathbb{R}$ be an irrational number. Then the continued fraction expansion of $\theta$ yields infinitely many natural
numbers $q$ such that
$$
\left|\theta-\frac{a}{q}\right| \le q^{-2},
$$
where $(a,q)=1$. In other words, for infinitely many $q\in \mathbb{N}$, we have
$$
||q \theta||\le q^{-1},
$$
where $||x||$ is the distance of $x$ to the nearest integer. The problem of approximating irrational numbers by rational numbers with prime 
denominator is considerable more difficult und has a long history. The question is for which $\gamma>0$ one can prove the infinitude
of primes $p$ such that
\begin{equation} \label{approxi}
||p \theta||\le p^{-\gamma+\varepsilon}.
\end{equation}
The first results in this direction are due to Vinogradov \cite{Vin} who showed that $\gamma=1/5$ is admissable. Vaughan \cite{Vau} improved this exponent to 
$\gamma=1/4$ using his famous identity for the von Mangoldt function. 
It should be noted that using Vaughan's method, an {\it asymptotic} result of the following form can be established. 

\begin{theorem} \label{Vaughan} Let $\theta\in \mathbb{R}$ be irrational and $\varepsilon>0$ be an arbitrary constant. 
There exists an infinite 
increasing sequence of natural numbers $(N_k)_{k\in \mathbb{N}}$ such that
\begin{equation} \label{asy}
\sum\limits_{\substack{N_k/2<p\le N_k\\ ||p\theta||\le \delta_k}} 1 \sim 2\delta_k \sum\limits_{N_k/2<p\le N_k} 1 
\quad \mbox{as } k\rightarrow \infty
\end{equation}
if 
\begin{equation} \label{range}
N_k^{-1/4+\varepsilon}\le \delta_k\le 1/2.
\end{equation}
\end{theorem}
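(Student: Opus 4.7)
\medskip

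\noindent\textbf{Proof proposal.} My plan is to prove \eqref{asy} by the Hardy--Littlewood circle method, combining a Fourier expansion of the indicator $\mathbf{1}_{\|x\|\le \delta_k}$ on $\R/\Z$ with Vaughan's bound for exponential sums over primes. The free parameters are the cutoff $H_k$ in the Fourier truncation and the choice of the sequence $(N_k)$, which will be slaved to the continued fraction of $\theta$.

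I would begin with a Beurling--Selberg/Vaaler sandwich: construct trigonometric polynomials $B_k^\pm(x)=\sum_{|h|\le H_k}\widehat{B}_k^\pm(h)e(hx)$ of degree at most $H_k$ satisfying $B_k^-\le \mathbf{1}_{\|x\|\le \delta_k}\le B_k^+$, with $\widehat{B}_k^\pm(0)=2\delta_k+O(H_k^{-1})$ and $|\widehat{B}_k^\pm(h)|\le \min(2\delta_k,|h|^{-1})$ for $h\ne 0$. Evaluating at $x=p\theta$ and summing over $N_k/2<p\le N_k$, the asymptotic \eqref{asy} reduces to showing
\[
E_k := \sum_{1\le |h|\le H_k} \min(\delta_k,|h|^{-1})\,|S_k(h\theta)| = o(\delta_k N_k/\log N_k),
\]
where $S_k(\alpha)=\sum_{N_k/2<p\le N_k} e(p\alpha)$, together with choosing $H_k$ large enough (e.g.\ $H_k\gg \delta_k^{-1}(\log N_k)^2$) so that the truncation loss $O(H_k^{-1}\pi(N_k/2,N_k))$ is absorbed.

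Next I would invoke Vaughan's identity, which yields
\[
|S_k(\alpha)| \ll \bigl(N_k q^{-1/2}+N_k^{4/5}+N_k^{1/2}q^{1/2}\bigr)(\log N_k)^C
\]
whenever $(a,q)=1$ and $|\alpha-a/q|\le q^{-2}$. This is $\ll N_k^{4/5}(\log N_k)^C$ when $q$ sits in the minor-arc window $[N_k^{2/5},N_k^{3/5}]$. To make this bound available uniformly at the frequencies $\alpha=h\theta$, I plan to select the subsequence $(N_k)$ along the denominators $q_n$ of the continued fraction convergents of $\theta$: since $\theta\notin\Q$, one has $q_n\to\infty$, and setting $N_k=\lfloor q_n^{5/3}\rfloor$ places $q_n$ in the Vaughan window. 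For most $h\le H_k$, Dirichlet's theorem applied directly to $h\theta$ yields an approximation with denominator also in a useful range, while the exceptional $h$ (those for which $h\theta$ is anomalously close to a rational with small denominator) form a sparse set controlled via $\gcd(h,q_n)$.

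The hard part will be the last point: verifying that, uniformly over $1\le h\le H_k$, the frequency $h\theta$ admits a rational approximation with denominator in the effective range of Vaughan's bound. This is exactly why the theorem is stated about a \emph{subsequence} $(N_k)$ rather than all $N$: for a generic $\theta$ there may be long stretches of $N$ for which no convergent of $\theta$ lies in the minor-arc window, rendering the Vaughan bound useless. By slaving $(N_k)$ to the convergents of $\theta$ and dyadically decomposing the $h$-sum according to $\gcd(h,q_n)$ and to the size of the denominator of the Dirichlet approximation to $h\theta$, I expect to obtain $E_k\ll N_k^{4/5}(\log N_k)^{C'}$, which under \eqref{range} is $o(\delta_k N_k/\log N_k)$ with room to spare; this yields \eqref{asy}.
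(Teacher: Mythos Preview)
The paper does not actually prove Theorem~\ref{Vaughan}; it is quoted in the introduction as a known consequence of ``Vaughan's method,'' and the body of the paper proves only the Gaussian analogue (Theorem~\ref{approx}). So there is no proof in the paper to compare against line by line, although the paper's Gaussian argument (Harman's sieve plus a Vaaler expansion, Sections~3--10) specializes to the classical case and is one route to Theorem~\ref{Vaughan}.

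Your overall plan --- Beurling--Selberg/Vaaler sandwich followed by Vaughan's exponential-sum bound --- is the right framework, but the parameters you propose do not yield the exponent $1/4$. With $N_k=\lfloor q_n^{5/3}\rfloor$ one has $q_n\approx N_k^{3/5}$, and the Vaughan bound $N q^{-1/2}+N^{4/5}+N^{1/2}q^{1/2}$ then gives only $\ll N_k^{4/5}$ already for the single frequency $h=1$. Your final claim that $E_k\ll N_k^{4/5}(\log N_k)^{C'}$ is $o(\delta_k N_k/\log N_k)$ under \eqref{range} is simply false: \eqref{range} permits $\delta_k=N_k^{-1/4+\varepsilon}$, so $\delta_k N_k/\log N_k$ can be as small as roughly $N_k^{3/4+\varepsilon}$, and $4/5>3/4$. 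What your choice actually recovers is Vinogradov's exponent $1/5$, not Vaughan's $1/4$. To hit $1/4$ one needs $q_n\approx N_k^{1/2}$ (that is, $N_k\approx q_n^2$), which makes the Vaughan bound $\ll N_k^{3/4+\varepsilon}$.

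Even after fixing that, there is a second structural gap. Estimating each $|S_k(h\theta)|$ separately via a Dirichlet approximation of $h\theta$ breaks down when $\gcd(h,q_n)$ is large, since the effective denominator $q_n/\gcd(h,q_n)$ is then small and the term $N_k q^{-1/2}$ is useless. The standard cure --- and what the paper does in its Gaussian argument --- is not to bound the frequencies $h\theta$ one at a time, but to keep the single approximation $|\theta-a/q_n|\le q_n^{-2}$ fixed and carry the $h$-summation \emph{inside} the Type~I/Type~II decomposition, so that one ultimately bounds averages of the shape $\sum_{h}\sum_{m}\min\bigl(\cdot,\|hm\theta\|^{-1}\bigr)$ directly using the spacing of multiples of $1/q_n$. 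Your proposed $\gcd$-stratification can in principle be pushed through, but it is neither the simplest route nor the one implicit in the paper's treatment.
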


The next important step was Harman's work \cite{Har} in which he used his sieve method to show
that \eqref{approxi} holds for infinitely many primes $p$ if $\gamma=3/10$. 
Harman's method doesn't imply the asymptotic \eqref{asy} for $\delta_k=N_k^{-3/10+\varepsilon}$ since it uses a lower
bound sieve. However, Harman's sieve can be employed to recover Vaughan's result and hence \eqref{asy} for the same
$\delta_k$-range as in \eqref{range}. We further mention the work of Heath-Brown and Jia \cite{HeJ} who
used bounds for Kloosterman sums to obtain a further improvement of the exponent to $\gamma=16/49=1/3-0.0068...$. Finally,
the exponent $\gamma=1/3$ was achieved in a landmark paper by Matom\"aki \cite{Mat} who incorporated the Kuznetsov formula into the method to bound
sums of Kloosterman sums. This exponent $\gamma=1/3$ is considered to be the limit of currently available techniques.

The purpose of this note is to formulate the problem for Gaussian primes and establish a result corresponding to 
Theorem \ref{Vaughan} in this context, thereby proving the infinitude of Gaussian primes satisfying an inequality 
corresponding to \eqref{approxi}. 
To this end, we shall apply a version of Harman's sieve for 
$\mathbb{Z}[i]$. Our method will require additional counting arguments, as compared to the classical method. 
Our results are stated in section \ref{con}.
 
As usual, throughout this paper, $\varepsilon$ denotes an arbitrarily small positive number which may change from line 
to line.

\section{Setup}
Throughout the sequel, $\theta$ is a complex number such that $\theta\not \in \mathbb{Q}(i)$, and $x\ge 1$. We compare the quantities
$$
S(x,\delta):=\sum\limits_{\substack{x/2<\mathcal{N}(p)\le x\\ ||p\theta||\le \delta}} 1 
$$
and
$$
S(x):=\sum\limits_{\substack{x/2<\mathcal{N}(p)\le x}} 1.
$$
Here the sums run over Gaussian primes $p$, $\mathcal{N}(n)$ denotes the norm of $n\in \mathbb{Z}[i]$, $0<\delta\le 1/2$, and we define
$$
||z||:=\max\left\{||\Re(z)||,||\Im(z)||\right\},
$$
where $\Re(z)$ is the real part and $\Im(z)$ is the imaginary part of $z\in \mathbb{C}$. Hence, $||z||$ measures the distance of $z$ to the 
nearest Gaussian integer with respect to the supremum norm. 
By the prime number theorem for Gaussian primes, 
\begin{equation} \label{primenumber}
S(x) \sim 2 \cdot \frac{x}{\log x} \quad \mbox{as } x\rightarrow \infty.
\end{equation}

Our goal is to construct an infinite increasing sequence $\left(N_k\right)_{k\in \mathbb{N}}$ of natural numbers such that
$$
S\left(N_k,\delta_k\right) \sim 4\delta_k^2 S(N_k) \quad \mbox{as } k \rightarrow \infty
$$
for $N_k^{-\gamma+\varepsilon}\le \delta_k\le 1/2$, where $\gamma$ is a suitable positive number.  
In this paper, we shall show that $\gamma=1/24$ is admissable.
The full analog of Theorem \eqref{Vaughan} would be achieved if $\gamma=1/24$ could be replaced by $\gamma=1/8$ 
(the exponent 1/4 in \eqref{range} needs to be  halfed because our setting is 2-dimensional). 

\section{Application of Harman's sieve for $\mathbb{Z}[i]$}
In the following, let $A$ be a finite set of non-zero Gaussian integers, $P$ be a subset of the set $\mathbb{P}$ of Gaussian primes and $z$ be a positive parameter. By 
$\mathcal{S}(A,P,z)$ we denote the number
of elements of $A$ which are coprime to $P(z)$, the product of all Gaussian primes in $P$ with norm $\le z$, i.e.
$$
\mathcal{S}(A,P,z)=\sharp\{n\in A\ :\ p\not| \; n \mbox{ for all } p\in P \mbox{ such that } \mathcal{N}(p)\le z\}.
$$
The following is a version of Harman's sieve for $\mathbb{Z}[i]$.

\begin{theorem}[Harman] \label{Harsie} Let $A,B$ be finite sets of non-zero Gaussian integers with norm $\le x$. Suppose for any sequences $(a_n)_{n\in \mathbb{Z}[i]}$ and 
$(b_n)_{n\in \mathbb{Z}[i]}$ of complex numbers satisfying $|a_n|,|b_n|\le 1$ 
the following hold:
\begin{equation} \label{t1}
\sum\limits_{\substack{\mathcal{N}(m)\le M\\ mn\in A}} a_m = \lambda \sum\limits_{\substack{\mathcal{N}(m)\le M\\ mn\in B}} a_m + O\left(Y\right),
\end{equation}
\begin{equation} \label{t2}
\sum\limits_{\substack{x^{\alpha}<\mathcal{N}(m)\le x^{\alpha+\beta}\\ mn\in A}} a_mb_n = 
\lambda \sum\limits_{\substack{x^{\alpha}<\mathcal{N}(m)\le x^{\alpha+\beta}\\ mn\in B}} a_mb_n + O\left(Y\right)
\end{equation}
for some $\lambda,Y>0$, $\alpha>0$, $0<\beta\le 1/2$ and $M>x^{\alpha}$. Then we have 
\begin{equation} \label{super}
\mathcal{S}(A,\mathbb{P},x^{\beta})=\lambda \mathcal{S}(B,\mathbb{P},x^{\beta})+O\left(Y\log^3 x\right).
\end{equation}
\end{theorem}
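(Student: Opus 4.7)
\medskip\noindent\textbf{Proof plan.} The plan is to adapt Harman's classical sieve argument for the rational integers directly to the Gaussian setting; this transfers essentially verbatim because $\mathbb{Z}[i]$ is a unique factorization domain, so the M\"obius function and Buchstab's identity carry over in the same form.

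First I would expand
$$
\mathcal{S}(A,\mathbb{P},x^{\beta}) \;=\; \sum_{d\mid P(x^{\beta})} \mu(d)\,|A_d|, \qquad |A_d| := \#\{n\in A : d\mid n\},
$$
and split according to $\mathcal{N}(d)\le M$ or $\mathcal{N}(d)>M$. The small-$d$ piece is of Type I: taking $a_m=\mu(m)\mathbf{1}_{m\mid P(x^{\beta})}$ (which satisfies $|a_m|\le 1$), it has the exact shape of the left side of \eqref{t1}, so hypothesis \eqref{t1} replaces it by $\lambda$ times the corresponding sum over $B$ up to $O(Y)$. For the large-$d$ tail, fix a canonical total order on Gaussian primes (by descending norm, with ties broken among the four unit-associates and the two conjugates); for each $d$ in the tail, list its prime factors in this order and let $e$ be the product of the leading primes until $\mathcal{N}(e)$ first exceeds $x^{\alpha}$. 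Since every prime used has norm $\le x^{\beta}$, one has $\mathcal{N}(e)\in(x^{\alpha},x^{\alpha+\beta}]$, so $e$ lies in the Type II window, and $f:=d/e$ has all prime factors of norm strictly less than those of $e$. After a dyadic splitting on the smallest prime-norm $Q$ occurring in $e$, the inner M\"obius sum over $f$ collapses via
$$
\sum_{\substack{f\mid h\\ f\mid P^{-}(Q)}} \mu(f) \;=\; \mathbf{1}_{(h,\,P^{-}(Q))=1}, \qquad P^{-}(Q):=\prod_{\mathcal{N}(p)<Q} p,
$$
yielding in each dyadic block a sum of the Type II shape of \eqref{t2} with coefficients bounded by $1$; hypothesis \eqref{t2} then contributes an error of $O(Y)$ per block.

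Summing the contributions across the $O(\log x)$ dyadic values of $Q$ and absorbing further $O(\log x)$ losses from the M\"obius truncation and from the auxiliary dyadic sums needed to decouple the $e$- and $f$-conditions (the threshold $Q$ is itself a function of $e$, so some averaging is unavoidable) yields the claimed $O(Y\log^{3}x)$ in \eqref{super}. The main obstacle is precisely this last decoupling step: engineering the canonical-factorization conditions on $e$ and $f$ so that they separate into $1$-bounded coefficients as required by \eqref{t2}, without losing more than $\log^{3}x$ overall. The passage to $\mathbb{Z}[i]$ itself adds only light bookkeeping — fixing canonical representatives to remove the ambiguity of associates and conjugates — and otherwise the proof proceeds in exact parallel with Harman's original argument.
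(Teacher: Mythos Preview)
Your proposal is correct and takes essentially the same approach as the paper, which simply states that the proof is parallel to Harman's classical Fundamental Theorem (Theorem~3.1 in \cite{HPD} with $R=1$, $c_r=1$) via repeated use of the Buchstab identity in $\mathbb{Z}[i]$, and omits all details. Your M\"obius-expansion/canonical-factorization presentation is a standard way to unpack that same argument, and your identification of the $e$/$f$-decoupling (handled by dyadic splitting on the smallest prime of $e$) as the only nontrivial step is accurate.
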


\begin{proof}
The proof is parallel to that of Harman's sieve for the classical case, Theorem 3.1. in \cite{HPD} 
(Fundamental Theorem) with $R=1$ and $c_r=1$, making repeated use of the Buchstab identity in the setting of $\mathbb{Z}[i]$ (see Chapter 11
in \cite{HPD}). Therefore, we omit the details.
\end{proof}

In the usual terminology, the sums in \eqref{t1} are referred to as type I bilinear sums, and the sums in \eqref{t2} as type II bilinear sums.

We shall apply Theorem \ref{Harsie} to the situation when 
$$
A:=\left\{n\in \mathbb{Z}[i]\ :\ x/2<\mathcal{N}(n)\le x,\ ||n\theta || \le \delta\right\} \quad
\mbox{with } 0<\delta\le 1/2,
$$
$$
B:=\left\{n\in \mathbb{Z}[i]\ :\ x/2<\mathcal{N}(n)\le x\right\} \quad \mbox{and} \quad \beta=\frac{1}{2}.
$$
The parameters $\alpha$ and $M$ will later be chosen suitably. We note that 
\begin{equation} \label{Sxd}
S(x,\delta)=\sum\limits_{\substack{x/2<\mathcal{N}(p)\le x\\ ||n\theta ||\le \delta}} 1 = \mathcal{S}(A,\mathbb{P},x^{1/2})  
\end{equation}
and
\begin{equation} \label{Sx}
S(x)=\sum\limits_{\substack{x/2<\mathcal{N}(p)\le x}} 1 = \mathcal{S}(B,\mathbb{P},x^{1/2}).
\end{equation}

\section{Detecting small $||mn\theta||$}
We observe that
$$
||mn\theta||\le \delta \Longleftrightarrow \left([\delta-\Re(mn\theta)]-[-\delta-\Re(mn\theta)]\right)\left([\delta-\Im(mn\theta)]-
[-\delta-\Im(mn\theta)]\right)=1.
$$
Hence, the type I sum in question can be written in the form
\begin{equation*}
\begin{split}
\sum\limits_{\substack{\mathcal{N}(m)\le M\\ mn\in A}} a_m = & \sum\limits_{\mathcal{N}(m)\le M} a_m \cdot
\sum\limits_{x/(2\mathcal{N}(m))<N(n)\le x/\mathcal{N}(m)} 
\left([\delta-\Re(mn\theta)]-[-\delta-\Re(mn\theta)]\right)\times\\ & \left([\delta-\Im(mn\theta)]-[-\delta-\Im(mn\theta)]\right).
\end{split}
\end{equation*}
Further, using $[x]=x-\psi(x)-1/2$, the inner sum over $n$ can be expressed in the form
\begin{equation*} 
\begin{split}
& \sum\limits_{x/(2\mathcal{N}(m))<N(n)\le x/\mathcal{N}(m)}  
\left([\delta-\Re(mn\theta)]-[-\delta-\Re(mn\theta)]\right)\times\\ & \left([\delta-\Im(nn\theta)]-[-\delta-\Im(mn\theta)]\right)\\
= & 4\delta^2 \sum\limits_{x/(2\mathcal{N}(m))<N(n)\le x/\mathcal{N}(m)} 1-\\ & 2\delta 
\sum\limits_{x/(2\mathcal{N}(m))<N(n)\le x/\mathcal{N}(m)} \left(\psi\left(\delta-\Im(mn\theta)\right)-\psi\left(-\delta-\Im(mn\theta)\right)\right) 
- \\ & 2\delta 
\sum\limits_{x/(2\mathcal{N}(m))<N(n)\le x/\mathcal{N}(m)} \left(\psi\left(\delta-\Re(mn\theta)\right)-\psi\left(-\delta-\Re(mn\theta)\right)\right)+\\
& \sum\limits_{x/(2\mathcal{N}(m))<N(n)\le x/\mathcal{N}(m)} \left(\psi\left(\delta-\Re(mn\theta)\right)-\psi\left(-\delta-\Re(mn\theta)\right)\right)\times\\ &   
\left(\psi\left(\delta-\Im(mn\theta)\right)-\psi\left(-\delta-\Im(mn\theta)\right)\right)\\
= &  4\delta^2 \sum\limits_{x/(2\mathcal{N}(m))<N(n)\le x/\mathcal{N}(m)} 1 - 2\delta S_1- 2\delta S_2+ S_3, 
\end{split}
\end{equation*}
say. Next, we approximate the function $\psi(x)$ by a trigonomtrical polynomial using the following lemma due to Vaaler (see \cite{GKo}, Theorem A6). 

\begin{lemma}[Vaaler] \label{Vaaler} For $0<|t|<1$ let
$$
W(t)=\pi t(1-|t|) \cot \pi t +|t|.
$$
Fix a natural number $J$. For $x\in \mathbb{R}$ define 
$$
\psi^{\ast}(x):=-\sum\limits_{1\le |j|\le J} (2\pi i j)^{-1}W\left(\frac{j}{J+1}\right)e(jx)
$$
and
$$
\sigma(x):=\frac{1}{2J+2} \sum\limits_{|j|\le J} \left(1-\frac{|j|}{J+1}\right)e(jx).
$$
Then $\sigma(x)$ is non-negative, and we have 
$$
|\psi^{\ast}(x)-\psi(x)|\le \sigma(x)
$$
for all real numbers $x$. 
\end{lemma}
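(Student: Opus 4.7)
The plan is to follow Vaaler's classical construction via Beurling--Selberg extremal functions. The starting point is Beurling's entire function of exponential type $2\pi$,
$$
B(z) = \left(\frac{\sin \pi z}{\pi}\right)^2 \left(\sum_{n \in \Z} \frac{\mathrm{sgn}(n)}{(z-n)^2} + \frac{2}{z}\right),
$$
which satisfies $B(x) \ge \mathrm{sgn}(x)$ on $\R$ and $\int_{\R}(B(x)-\mathrm{sgn}(x))\,dx = 1$. From $B$, Selberg's symmetrisation together with integration produces, after rescaling by the factor $J+1$, entire functions $M^{\pm}(z)$ of exponential type $2\pi(J+1)$ that sandwich the sawtooth $\psi(x) = x - \lfloor x \rfloor - 1/2$ from above and below on $\R$, with $\int_{\R}(M^+ - M^-) = 1/(J+1)$.

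Next, I would periodise. Since $\widehat{M^{\pm}}$ is supported in $[-(J+1),J+1]$ by Paley--Wiener, Poisson summation yields
$$
\sum_{k \in \Z} M^{\pm}(x+k) = \sum_{|j| \le J+1} \widehat{M^{\pm}}(j)\, e(jx),
$$
a trigonometric polynomial of degree at most $J$ (the extreme terms $|j|=J+1$ vanish after the Fejér-type weighting). Setting $\psi^{\ast}(x)$ equal to the average of these two periodisations, with the mean of $\psi$ subtracted off, and $\sigma(x)$ equal to half their difference, one reads off the stated Fourier expansions directly from the values $\widehat{M^{\pm}}(j)$: the weight $(2\pi i j)^{-1}\,W(j/(J+1))$ for $\psi^{\ast}$ and the Fejér weight $(1-|j|/(J+1))/(2J+2)$ for $\sigma$.

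The two conclusions then follow cleanly. The bound $|\psi^{\ast}(x) - \psi(x)| \le \sigma(x)$ is nothing but the sandwiching $M^- \le \psi \le M^+$ carried through the periodisation (both sides are $1$-periodic since $\psi$ is), and non-negativity of $\sigma$ is either immediate from $M^+ \ge M^-$ or visible by recognising the Fourier coefficients of $\sigma$ as those of a rescaled Fejér kernel, which admits a representation as the squared modulus of a trigonometric sum.

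The main obstacle, and the only step with genuine content, is the explicit evaluation of $\widehat{M^{\pm}}(j)$ at the integers: the transcendental cotangent factor in $W(t) = \pi t(1-|t|)\cot\pi t + |t|$ arises from the Fourier transform of the $\mathrm{sgn}$-summand in Beurling's function, and the non-trivial algebra lies there. Since the lemma is classical and the paper cites Graham--Kolesnik, where this computation is carried out in full, I would defer the remaining arithmetic to that reference rather than reproduce it.
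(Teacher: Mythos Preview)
The paper does not prove this lemma at all; it merely states it and cites Graham--Kolesnik (Theorem A6) for the proof. Your proposal goes further than the paper by sketching the classical Beurling--Selberg route, which is indeed the correct argument, and you ultimately defer the explicit Fourier computation to the same reference. So your proposal is consistent with, and in fact more informative than, what the paper provides.
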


Throughout the sequel, $J$ denotes a natural number such that $J\ge \delta^{-1}$ which will be fixed in section \ref{finest}. From 
Lemma \ref{Vaaler}, we deduce that
\begin{equation*}
\begin{split}
S_1\ll & \frac{x/\mathcal{N}(m)}{J}+ \sum\limits_{1\le |j|\le J} \frac{1}{|j|} \cdot  \Big|
\sum\limits_{x/(2\mathcal{N}(m))<N(n)\le x/\mathcal{N}(m)} \left(e\left(j(\delta-\Im(mn\theta))\right)-e\left(j(-\delta-\Im(mn\theta)\right)
\right) \Big| \\
\ll & \frac{x/\mathcal{N}(m)}{J}+ \sum\limits_{1\le |j|\le J} \frac{1}{|j|}\cdot \Big|
\sum\limits_{x/(2\mathcal{N}(m))<N(n)\le x/\mathcal{N}(m)} \left(e\left(j\delta\right)-e\left(-j\delta\right)\right) \cdot 
e\left(-j\Im(mn\theta)\right)\Big|\\ 
\ll & \frac{x/\mathcal{N}(m)}{J}+ \sum\limits_{1\le |j|\le J} \min\{\delta, |j|^{-1}\} \cdot \Big|
\sum\limits_{x/(2\mathcal{N}(m))<N(n)\le x/\mathcal{N}(m)} e\left(j\Im(mn\theta)\right)\Big|.
\end{split}
\end{equation*}
In a similar way, we obtain
\begin{equation*}
\begin{split}
S_2\ll \frac{x/\mathcal{N}(m)}{J}+ \sum\limits_{1\le |j|\le J} \min\{\delta, |j|^{-1}\} \cdot \Big|
\sum\limits_{x/(2\mathcal{N}(m))<N(n)\le x/\mathcal{N}(m)} e\left(j\Re(mn\theta)\right)\Big|
\end{split}
\end{equation*}
and
\begin{equation*}
\begin{split}
S_3\ll & \frac{x/\mathcal{N}(m)}{J^2} + \frac{1}{J}\cdot \sum\limits_{1\le |j_1|\le J} \min\{\delta,|j_1|^{-1}\} \cdot \Big|
\sum\limits_{x/(2\mathcal{N}(m))<N(n)\le x/\mathcal{N}(m)} e\left(j_1\Im(mn\theta)\right)\Big| + \\
& \frac{1}{J}\cdot \sum\limits_{1\le |j_2|\le J} \min\{\delta, |j_2|^{-1}\} \cdot \Big|
\sum\limits_{x/(2\mathcal{N}(m))<N(n)\le x/\mathcal{N}(m)} e\left(j_2\Re(mn\theta)\right)\Big|+\\
& \sum\limits_{\substack{1\le |j_1|\le J\\ 1\le |j_2|\le J}} \min\{\delta, |j_1|^{-1}\} \cdot \min\{\delta, |j_2|^{-1}\} \cdot \Big|
\sum\limits_{x/(2\mathcal{N}(m))<N(n)\le x/\mathcal{N}(m)} e\left(j_1\Im(mn\theta)+j_2\Re(mn\theta)\right)\Big|.
\end{split}
\end{equation*}
Summing over $m$ and using $|a_m|\le 1$ and $J\ge \delta^{-1}$, we get
\begin{equation} \label{typeI}
\begin{split}
\sum\limits_{\substack{\mathcal{N}(m)\le M\\ mn\in A}} a_m = & 4\delta^2 \sum\limits_{\mathcal{N}(m)\le M} a_m
\sum\limits_{x/(2\mathcal{N}(m))<N(n)\le x/\mathcal{N}(m)} 1\\ 
& + O\left(\delta x^{1+\varepsilon}J^{-1}+\delta (E_1+E_2)+E_3\right)\\
= &
4\delta^2 \sum\limits_{\substack{\mathcal{N}(m)\le M\\ mn\in B}} a_m+
O\left(\delta x^{1+\varepsilon}J^{-1}+\delta (E_1+E_2)+E_3\right), 
\end{split}
\end{equation}
where
\begin{equation*}
E_1=\sum\limits_{1\le |j|\le J} \min\{\delta,|j|^{-1}\} \cdot \sum\limits_{\mathcal{N}(m)\le M} \Big|
\sum\limits_{x/(2\mathcal{N}(m))<N(n)\le x/\mathcal{N}(m)} e\left(j\Im(mn\theta)\right)\Big|,
\end{equation*}
\begin{equation*}
E_2=\sum\limits_{1\le |j|\le J} \min\{\delta,|j|^{-1}\} \cdot \sum\limits_{\mathcal{N}(m)\le M} \Big|
\sum\limits_{x/(2\mathcal{N}(m))<N(n)\le x/\mathcal{N}(m)} e\left(j\Re(mn\theta)\right)\Big|
\end{equation*}
and 
\begin{equation}
\begin{split}
E_3= & \sum\limits_{1\le |j_1|\le J} \sum\limits_{1\le |j_2|\le J} \min\{\delta,|j_1|^{-1}\} \cdot \min\{\delta,|j_2|^{-1}\} 
\times\\ & 
\sum\limits_{\mathcal{N}(m)\le M} \Big|
\sum\limits_{x/(2\mathcal{N}(m))<N(n)\le x/\mathcal{N}(m)}e\left(j_1\Im(mn\theta)+j_2\Re(mn\theta)\right)\Big|.
\end{split}
\end{equation}

In a similar way, using $|a_m|,|b_n|\le 1$ and $J\ge \delta^{-1}$, we derive the asymptotic estimate
\begin{equation} \label{typeII}
\begin{split}
\sum\limits_{\substack{x^{\alpha}<\mathcal{N}(m)\le x^{\alpha+\beta}\\ mn\in A}} a_mb_n = &
4\delta^2 \sum\limits_{\substack{x^{\alpha}<\mathcal{N}(m)\le x^{\alpha+\beta}\\ mn\in B}} a_mb_n+ \\ &
O\left(\delta x^{1+\varepsilon}J^{-1}+\delta(F_1+G_1+F_2+G_2)+F_3+G_3\right), 
\end{split}
\end{equation}
where
\begin{equation*}
F_1= \sum\limits_{1\le |j|\le J} \min\{\delta,|j|^{-1}\} \cdot 
\Big|\sum\limits_{\substack{x^{\alpha}<\mathcal{N}(m)\le x^{\alpha+\beta}\\ mn\in B}} 
a_mb_ne\left(j\Im(mn\theta)\right)\Big|,
\end{equation*}
\begin{equation*}
F_2= \sum\limits_{1\le |j|\le J} \min\{\delta,|j|^{-1}\} \cdot 
\Big|\sum\limits_{\substack{x^{\alpha}<\mathcal{N}(m)\le x^{\alpha+\beta}\\ mn\in B}} 
a_mb_ne\left(j\Re(mn\theta)\right)\Big|,
\end{equation*}
\begin{equation}
\begin{split}
F_3= & \sum\limits_{1\le |j_1|\le J} \sum\limits_{1\le |j_2|\le J} \min\{\delta,|j_1|^{-1}\} \cdot \min\{\delta,|j_2|^{-1}\}\times\\
& 
\Big|\sum\limits_{\substack{x^{\alpha}<\mathcal{N}(m)\le x^{\alpha+\beta}\\ mn\in B}} a_mb_ne\left(j_1\Im(mn\theta)+j_2\Re(mn\theta)\right)\Big|,
\end{split}
\end{equation}
and $G_i$ $(i=1,2,3)$ are the same sums as $F_i$ with $a_mb_n$ replaced by $1$. The sums $G_i$ 
$(i=1,2,3)$ can be treated similarly as the sums $F_i$. Therefore, in the following, we focus only on the treatments of $E_i$ and $F_i$ 
$(i=1,2,3)$. 

\section{Transformations of the sums $E_i$ and $F_i$} \label{trans}

We note that 
\begin{equation} \label{E1E2}
E_1=E_2.
\end{equation} 
We further have, by breaking the $|j|$-range into $O(\log 2J)$ dyadic intervals, 
\begin{equation} \label{E1H}
E_1\ll (\log 2J) \cdot \sup\limits_{1\le H\le J} \min\{\delta,H^{-1}\} \cdot E_1(H),
\end{equation}
where
\begin{equation}
E_1(H)=\sum\limits_{1\le |j|\le H} \sum\limits_{\mathcal{N}(m)\le M} \Big|
\sum\limits_{x/(2\mathcal{N}(m))<N(n)\le x/\mathcal{N}(m)} e\left(j\Im(mn\theta)\right)\Big|.
\end{equation}
Similarly,
\begin{equation} \label{E3rel}
E_3\ll (\log 2J)^2\cdot \sup\limits_{\substack{1\le |H_1|\le J\\ 1\le |H_2|\le J}} \min\{\delta,H_1^{-1}\} \cdot
\min\{\delta,H_2^{-1}\} \cdot E_3(H_1,H_2),
\end{equation}
where 
\begin{equation}
E_3(H_1,H_2)=\mathop{\sum\limits_{|j_1|\le H_1} 
\sum\limits_{|j_2|\le H_2}}_{(j_1,j_2)\not= (0,0)} \sum\limits_{\mathcal{N}(m)\le M} \Big|
\sum\limits_{x/(2\mathcal{N}(m))<N(n)\le x/\mathcal{N}(m)}e\left(j_1\Im(mn\theta)+j_2\Re(mn\theta)\right)\Big|.
\end{equation}
We note that
\begin{equation} \label{Edef}
E_3(H_1,H_2)=\sum\limits_{\substack{j\not=0\\ |\Re(j)|\le H_1\\ |\Im(j)|\le H_2}} \sum\limits_{\mathcal{N}(m)\le M}
\Big| \sum\limits_{x/(2\mathcal{N}(m))<N(n)\le x/\mathcal{N}(m)} e\left(\Im(jmn\theta)\right)\Big|
\end{equation}
and hence, 
\begin{equation} \label{E1E3}
E_1(H)= E_3(H,1/2). 
\end{equation}
Thus, it suffices to estimate $E_3(H_1,H_2)$ for $H_1\ge 1$ and $H_2\ge 1/2$ to bound $E_1$, $E_2$ and $E_3$.

Similarly,
\begin{equation} \label{F1F2}
F_1=F_2
\end{equation}
and 
\begin{equation} \label{F1H}
F_1\ll (\log 2J) \cdot \sup\limits_{1\le H\le J} \min\{\delta,H^{-1}\} \cdot F_1(H),
\end{equation}
where
\begin{equation}
F_1(H)=\sum\limits_{1\le |j|\le H} \Big|\sum\limits_{\substack{x^{\alpha}<\mathcal{N}(m)\le x^{\alpha+\beta}\\ mn\in B}} 
a_mb_ne\left(j\Im(mn\theta)\right)\Big|,
\end{equation}
and 
\begin{equation} \label{F3rel}
F_3= (\log 2J)^2\cdot \sup\limits_{\substack{1\le |H_1|\le J\\ 1\le |H_2|\le J}} \min\{\delta,H_1^{-1}\} \cdot
\min\{\delta,H_2^{-1}\} \cdot F_3(H_1,H_2),
\end{equation}
where 
\begin{equation} \label{F3}
\begin{split}
F_3(H_1,H_2)= & \mathop{\sum\limits_{|j_1|\le H_1} \sum\limits_{|j_2|\le H_2}}_{(j_1,j_2)
\not=(0,0)}
\Big|\sum\limits_{\substack{x^{\alpha}<\mathcal{N}(m)\le x^{\alpha+\beta}\\ mn\in B}} 
a_mb_ne\left(j_1\Im(mn\theta)+j_2\Re(mn\theta)\right)\Big|\\
= & \sum\limits_{\substack{j\not=0\\ |\Re(j)|\le H_1\\ |\Im(j)|\le H_2}} \Big|\sum\limits_{\substack{x^{\alpha}<\mathcal{N}(m)\le x^{\alpha+\beta}\\ mn\in B}} 
a_mb_ne\left(\Im(jmn\theta)\right)\Big|
\end{split}
\end{equation}
and hence,
\begin{equation} \label{F1F3}
F_1(H)=F_3(H,1/2).
\end{equation}
Thus, it suffices to estimate $F_3(H_1,H_2)$ for $H_1\ge 1$ and $H_2\ge 1/2$ to bound $F_1$, $F_2$ and $F_3$.

So we have reduced the problem to bounding the type I sums $E_3(H_1,H_2)$ and the type II sums $F_3(H_1,H_2)$.

\section{Treatment of type II sums} \label{treat}
To treat the type II sums, we first reduce them to type I sums. We begin by splitting $F_3(H_1,H_2)$ into subsums of the form
\begin{equation} \label{subsums}
F_3(H_1,H_2,K,K'):=\sum\limits_{\substack{j\not=0\\ |\Re(j)|\le H_1\\ |\Im(j)|\le H_2}} \Big|\sum\limits_{\substack{K<\mathcal{N}(m)\le K'\\ mn\in B}} 
a_mb_ne\left(\Im(jmn\theta)\right)\Big|,
\end{equation}
where $K<K'\le 2K$. Next, we apply the Cauchy-Schwarz inequality, getting
$$
F_3(H_1,H_2,K,K')^2\ll H_1H_2K\cdot \sum\limits_{\substack{j\not=0\\ |\Re(j)|\le H_1\\ |\Im(j)|\le H_2}} 
\sum\limits_{\substack{K<\mathcal{N}(m)\le K'}} \Big| \sum\limits_{x/(2\mathcal{N}(m))<\mathcal{N}(n)\le x/\mathcal{N}(m)} 
b_ne\left(\Im(jmn\theta)\right)\Big|^2, 
$$
where we use the bound $|a_m|\le 1$. 
Expanding the square and re-arranging summation, we get
\begin{equation} \label{F}
\begin{split}
& F_3(H_1,H_2,K,K')^2\ll H_1H_2K\cdot  \sum\limits_{\substack{j\not=0\\ |\Re(j)|\le H_1\\ |\Im(j)|\le H_2}}\sum\limits_{x/(2K')<\mathcal{N}(n_1),\mathcal{N}(n_2) \le x/K} b_{n_1}\overline{b_{n_2}}\times\\ 
& \sum\limits_{\max\{K,x/(2\mathcal{N}(n_1)),x/(2\mathcal{N}(n_2))\}<\mathcal{N}(m)\le \min\{K',x/\mathcal{N}(n_1),x/\mathcal{N}(n_2)\}}
e\left(\Im(jm(n_1-n_2)\theta)\right)\\
\ll & H_1^2H_2^2Kx+ H_1H_2K\cdot \sum\limits_{\substack{j\not=0\\ |\Re(j)|\le H_1\\ |\Im(j)|\le H_2}} 
\sum\limits_{\substack{x/(2K')<\mathcal{N}(n_1),\mathcal{N}(n_2) \le x/K\\ n_1\not=n_2}} 
\\ 
& \Big| \sum\limits_{\max\{K,x/(2\mathcal{N}(n_1)),x/(2\mathcal{N}(n_2))\}<\mathcal{N}(m)\le \min\{K',x/\mathcal{N}(n_1),x/\mathcal{N}(n_2)\}}
e\left(\Im(jm(n_1-n_2)\theta)\right)\Big|\\
\ll & H_1^2H_2^2Kx+ H_1H_2K\cdot \sum\limits_{0<\mathcal{N}(n)\le 4(H_1^2+H_2^2)x/K} \sum\limits_{j|n} 
\sum\limits_{\substack{x/(2K')<\mathcal{N}(n_1),\mathcal{N}(n_2) \le x/K\\ n/j=n_1-n_2}} \\
& \Big| \sum\limits_{\max\{K,x/(2\mathcal{N}(n_1)),x/(2\mathcal{N}(n_2))\}<\mathcal{N}(m)\le \min\{K',x/\mathcal{N}(n_1),x/\mathcal{N}(n_2)\}}
e\left(\Im(mn\theta)\right)\Big|.\\
\end{split}
\end{equation}
Here the second line arrives by isolating the diagonal contribution of $n_1=n_2$ and using the bound $|b_n|\le 1$,
and the third line arrives by writing $n=j(n_1-n_2)$.

\section{Estimating sums of linear exponential sums}
Our next task is to bound linear exponential sums of the form 
\begin{equation} \label{1}
\begin{split}
& \sum\limits_{\tilde{y}<\mathcal{N}(m)\le y} e\left(\Im(m\kappa)\right) =  \sum\limits_{\substack{(m_1,m_2)\in \mathbb{Z}^2\\ 
\tilde{y}<m_1^2+m_2^2\le y}} 
e\left(\Re(\kappa)m_2+\Im(\kappa)m_1\right)\\ = & \sum\limits_{\substack{(m_1,m_2)\in \mathbb{Z}^2\\ m_1^2+m_2^2\le y}} 
e\left(\Re(\kappa)m_2+\Im(\kappa)m_1\right) - \sum\limits_{\substack{(m_1,m_2)\in \mathbb{Z}^2\\ m_1^2+m_2^2\le \tilde{y}}} 
e\left(\Re(\kappa)m_2+\Im(\kappa)m_1\right),
\end{split}
\end{equation}
where $\kappa$ is a complex number and $0\le \tilde{y}<y$. Here we use the following simple slicing argument. We have
\begin{equation} \label{m_1m_2}
\begin{split} 
& \sum\limits_{\substack{(m_1,m_2)\in \mathbb{Z}^2\\ 
m_1^2+m_2^2\le y}} 
e\left(\Re(\kappa)m_2+\Im(\kappa)m_1\right)\\ = & \sum\limits_{-\sqrt{y}\le m_1\le \sqrt{y}} \sum\limits_{-\sqrt{y-m_1^2}\le  m_2 \le \sqrt{y-m_1^2} }
e\left(\Re(\kappa)m_2+\Im(\kappa)m_1\right) \\
\ll & \sum\limits_{-\sqrt{y}\le m_1\le \sqrt{y}}  \Big|\sum\limits_{-\sqrt{y-m_1^2}\le  m_2 \le \sqrt{y-m_1^2}} 
e\left(\Re(\kappa)m_2\right) \Big|\\
\ll & y^{1/2} \cdot \min\left\{||\Re(\kappa)||^{-1},\sqrt{y}\right\},
\end{split}
\end{equation}
where we use the classical bound
$$
\sum\limits_{a<m\le b} e(mz) \ll \min\left\{b-a+1,||z||^{-1}\right\}
$$
for linear exponential sums. Similarly, by interchanging the rules of $m_1$ and $m_2$, we get
\begin{equation*}
\begin{split} 
 \sum\limits_{\substack{(m_1,m_2)\in \mathbb{Z}^2\\ 
m_1^2+m_2^2\le y}} 
e\left(\Re(\kappa)m_2+\Im(\kappa)m_1\right) \ll y^{1/2} \cdot \min\left\{||\Im(\kappa)||^{-1},\sqrt{y}\right\}.
\end{split}
\end{equation*}
Taking the geometric mean of these two estimates gives 
\begin{equation*}
\sum\limits_{\substack{(m_1,m_2)\in \mathbb{Z}^2\\ m_1^2+m_2^2\le y}} 
e\left(\Re(\kappa)m_2+\Im(\kappa)m_1\right) \ll y^{1/2} \cdot 
\min\left\{||\Im(\kappa)||^{-1},\sqrt{y}\right\}^{1/2} \cdot \min\left\{||\Re(\kappa)||^{-1},\sqrt{y}\right\}^{1/2}.
\end{equation*}
Using \eqref{1}, we deduce that
\begin{equation} \label{lin}
\sum\limits_{\tilde{y}<\mathcal{N}(m)\le y} e\left(\Im(m\kappa)\right) \ll y^{1/2} \cdot 
\min\left\{||\Im(\kappa)||^{-1},\sqrt{y}\right\}^{1/2} \cdot \min\left\{||\Re(\kappa)||^{-1},\sqrt{y}\right\}^{1/2}.
\end{equation}

To bound the sums appearing in sections \ref{trans} and \ref{treat}, we need to bound sums of linear sums of roughly the shape
$$
\sum\limits_{\mathcal{N}(n)\sim Z} \left|\sum\limits_{\mathcal{N}(m)\sim Y} e\left(\Im(mn\theta)\right)\right|.
$$
Considering \eqref{lin}, we  are left with bounding expressions of the form
\begin{equation} \label{Gc}
G_{\theta}(y,z):=\sum\limits_{0<N(n)\le z} \min\left\{||\Im(n \theta)||^{-1},\sqrt{y}\right\}^{1/2} \cdot 
\min\left\{||\Re(n \theta)||^{-1},\sqrt{y}\right\}^{1/2},
\end{equation}
where $y,z\ge 1$. To this end, we break the above into partial sums 
\begin{equation}
\begin{split}
& G_{\theta}(y,z,\Delta_1,\Delta_1',\Delta_2,\Delta_2')\\ := & \sum\limits_{\substack{0<N(n)\le z\\ \Delta_1< ||\Im(n \theta)|| \le \Delta_1'\\ 
\Delta_2< ||\Re(n \theta)||\le \Delta_2'}} 
\min\left\{||\Im(n \theta)||^{-1},\sqrt{y}\right\}^{1/2} \cdot \min\left\{||\Re(n \theta)||^{-1},\sqrt{y}\right\}^{1/2} 
\end{split}
\end{equation}
with $0\le \Delta_1< \Delta_1'\le 1/2$ and $0\le \Delta_2< \Delta_2'\le 1/2$ and bound them by 
\begin{equation} \label{G}
G_{\theta}(y,z,\Delta_1,\Delta_1',\Delta_2,\Delta_2')\ll  
\min\left\{\Delta_1^{-1},\sqrt{y}\right\}^{1/2} \cdot \min\left\{\Delta_2^{-1},\sqrt{y}\right\}^{1/2} \cdot  
\Sigma_{\theta}(z,\Delta_1',\Delta_2'),
\end{equation}
where
\begin{equation}\label{Sigma}
\Sigma_{\theta}(z,\Delta_1',\Delta_2') = 
\sum\limits_{\substack{0<\mathcal{N}(n)\le z\\ ||\Im(n \theta)|| \le \Delta_1'\\ ||\Re(n \theta)||
\le \Delta_2'}} 1.
\end{equation}

In the next section, we shall prove that for infinitely many Gaussian integers $q$, a bound of the form 
\begin{equation} \label{plug}
\Sigma_{\theta}(z,\Delta_1',\Delta_2')\ll  
\left(1+\frac{z}{|q|^2}\right) \cdot \left(1+\Delta_1'|q|\right)\left(1+\Delta_2'|q|\right)
\end{equation}
holds.  We shall also see that for these $q$, we have
\begin{equation} \label{also}
\Sigma_{\theta}(z,\Delta_1',\Delta_2')=0 \quad \mbox{if }
\max\{\Delta_1',\Delta_2'\}< 1/(\sqrt{8}|q|) \mbox{ and } z\le |q|^2/(8C^2) 
\end{equation}
with $C=2+\sqrt{2}$. Plugging \eqref{plug} into \eqref{G} gives
\begin{equation} \label{Gtheta}
\begin{split}
G_{\theta}(y,z,\Delta_1,\Delta_1',\Delta_2,\Delta_2')\ll &  
\left(1+\frac{z}{|q|^2}\right)\cdot \min\left\{\Delta_1^{-1},\sqrt{y}\right\}^{1/2} \cdot \min\left\{\Delta_2^{-1},\sqrt{y}\right\}^{1/2} \times\\ &  
\left(1+\Delta_1'|q|\right)\cdot \left(1+\Delta_2'|q|\right).
\end{split}
\end{equation}

Next, we write
\begin{equation*}
\begin{split}
G_{\theta}(y,z)=& G_{\theta}(y,z,0,2^{-L-1},0,2^{-L-1})+\sum\limits_{i=1}^L \sum\limits_{j=1}^L G_{\theta}(y,z,2^{-i-1},2^{-i},2^{-j-1},2^{-j})+\\ & + \sum\limits_{j=1}^L 
G_{\theta}(y,z,0,2^{-L-1},2^{-j-1},2^{-j})+\sum\limits_{i=1}^L 
G_{\theta}(y,z,2^{-i-1},2^{-i},0,2^{-L-1}),
\end{split}
\end{equation*}
where $L$ satisfies $1/(2\sqrt{y})\le 2^{-L-1}< 1/\sqrt{y}$. Using \eqref{Gtheta}, we deduce that
\begin{equation} \label{firstcase}
\begin{split}
G_{\theta}(y,z)\ll& \left(1+\frac{z}{|q|^2}\right)\left(y^{1/2}+|q|^2\right) \cdot (\log 2y)^2.
\end{split}
\end{equation}
If $z\le |q|^2/(8C^2)$, then using \eqref{also}, we have 
\begin{equation*}
\begin{split}
G_{\theta}(y,z)=& \sum\limits_{i=1}^L \sum\limits_{j=1}^L G_{\theta}(y,z,2^{-i-1},2^{-i},2^{-j-1},2^{-j})+\\ & + \sum\limits_{j=1}^L 
G_{\theta}(y,z,0,2^{-L-1},2^{-j-1},2^{-j})+\sum\limits_{i=1}^L 
G_{\theta}(y,z,2^{-i-1},2^{-i},0,2^{-L-1}),
\end{split}
\end{equation*}
where $1/(2\sqrt{8}|q|)\le 2^{-L-1}< 1/(\sqrt{8}|q|)$. In this case, using \eqref{Gtheta}, we deduce that
\begin{equation} \label{secondcase}
\begin{split}
G_{\theta}(y,z)\ll \left(|q|y^{1/4}+|q|^{2}\right)\cdot \log^2(2|q|).
\end{split}
\end{equation}

\section{Counting}
In this section, we prove \eqref{plug} and \eqref{also}. To bound the quantity $G_{\theta}(y,z)$, 
we need information about the spacing of the points $n\theta$ modulo 1, where 
$n\in \mathbb{Z}[i]$.   
We begin by using the Hurwitz continued fraction development of $\theta$ in $\mathbb{Z}[i]$ (see \cite{Hur}) to approximate $\theta$ in the form 
$$
\theta=\frac{a}{q}+\gamma,
$$
where $a,q\in \mathbb{Z}[i]$, $(a,q)=1$ and 
$$
|\gamma| \le C|q|^{-2}
$$
with $C=2+\sqrt{2}$. 
As in the classical case, this continued fraction development yields a sequence of infinitely many $q\in \mathbb{Z}[i]$ satisfying the above. 
Now it follows that 
\begin{equation} \label{dist}
\begin{split}
& \left|\left|n_1\theta-n_2\theta\right|\right|=\left|\left| \frac{(n_1-n_2)a}{q} + (n_1-n_2)\gamma\right|\right| \ge
\left|\left| \frac{(n_1-n_2)a}{q} \right|\right| - |n_1-n_2|\cdot |\gamma|\\
\ge & \frac{1}{\sqrt{2}|q|}- C\cdot \frac{|n_1-n_2|}{|q|^2} 
\end{split}
\end{equation}
if $n_1,n_2\in \mathbb{Z}[i]$ such that $n_1\not\equiv n_2\bmod{q}$. We cover the set 
$$
\mathcal{Z}:=\{n\in \mathbb{Z}[i]\ :\ 0<\mathcal{N}(n)\le z\}
$$
by $O\left(1+z/|q|^2\right)$ disjoint rectangles 
$$
\mathcal{R}=\{s\in \mathbb{C}\ :\ a_1< \Re(s)\le b_1,\ a_2< \Im(s)\le b_2\},
$$
where $|b_i-a_i|\le |q|/(4C)$, so that 
$$
\mathcal{Z}\subset \bigcup\limits_{\mathcal{R}} \mathcal{R}.
$$ 
Note that if $n_1,n_2\in \mathbb{Z}[i]\cap \mathcal{R}$, then $|n_1-n_2|\le |q|/(2\sqrt{2}C)$ and
hence, by \eqref{dist}, if $n_1,n_2\in \mathbb{Z}[i]\cap \mathcal{R}$ and $n_1\not=n_2$, then 
\begin{equation} \label{dist2}
\begin{split}
\left|\left|n_1\theta-n_2\theta\right|\right|\ge \frac{1}{2\sqrt{2}|q|}. 
\end{split}
\end{equation}

Now, 
\begin{equation*}
\Sigma_{\theta}(z,\Delta_1',\Delta_2') \le \sum\limits_{\mathcal{R}} \Sigma_{\theta}(\mathcal{R},\Delta_1',\Delta_2'),
\end{equation*}
where 
\begin{equation} \label{of}
\begin{split}
& \Sigma_{\theta}(\mathcal{R},\Delta_1',\Delta_2') := 
\sum\limits_{\substack{n\in \mathbb{Z}[i]\cap \mathcal{R}\\ ||\Im(n \theta)||\le \Delta_1'\\ ||\Re(n \theta)|| \le \Delta_2'}} 1 \\
= & \sum\limits_{\substack{n\in \mathbb{Z}[i]\cap\mathcal{R}\\ \{\Im(n \theta)\}\le \Delta_1'\\ \{\Re(n \theta)\} \le \Delta_2'}} 1 +
\sum\limits_{\substack{n\in \mathbb{Z}[i]\cap\mathcal{R}\\ \{\Im(n \theta)\}\ge 1-\Delta_1'\\ \{\Re(n \theta)\} \le \Delta_2'}} 1 +
\sum\limits_{\substack{n\in \mathbb{Z}[i]\cap\mathcal{R}\\ \{\Im(n \theta)\}\le \Delta_1'\\ \{\Re(n \theta)\} \ge 1-\Delta_2'}} 1 +
\sum\limits_{\substack{n\in \mathbb{Z}[i]\cap\mathcal{R}\\ \{\Im(n \theta)\}\ge 1-\Delta_1'\\ \{\Re(n \theta)\} \ge 1-\Delta_2'}} 1.
\end{split}
\end{equation}
If $\{\Im(n_i \theta)\}\le \Delta_1'\le 1/2$ and $\{\Re(n_i \theta)\} \le \Delta_2' \le 1/2$ for $i=1,2$, then
$$
\left|(\{\Re(n_1\theta)\}+i\{\Im(n_1\theta)\})-(\{\Re(n_2\theta)\}+i\{\Im(n_2\theta)\})\right|\ge
\left|\left|n_1\theta-n_2\theta\right|\right|,
$$
and hence, by \eqref{dist2}, if $n_1,n_2\in \mathbb{Z}[i]\cap \mathcal{R}$ and $n_1\not=n_2$, then
$$
\left|(\{\Re(n_1\theta)\}+i\{\Im(n_1\theta)\})-(\{\Re(n_2\theta)\}+i\{\Im(n_2\theta)\})\right|\ge \frac{1}{2\sqrt{2}|q|}.
$$
It follows that 
$$
\sum\limits_{\substack{n\in \mathbb{Z}[i]\cap\mathcal{R}\\ \{\Im(n \theta)\}\le \Delta_1'\\ \{\Re(n \theta)\} \le \Delta_2'}} 1\ll 
V_{1/(2\sqrt{2}|q|)}\left(\Delta_1',\Delta_2'\right),
$$
where $V_D\left(\Delta_1'\Delta_2'\right)$ is the maximal number of points of distance $\ge D$ that can be put 
into a rectangle with dimensions $\Delta_1'$ and $\Delta_2'$. The remaining three sums in the last line of \eqref{of} can be
estimated similarly. It follows that
$$
\Sigma_{\theta}(\mathcal{R},\Delta_1',\Delta_2')\ll V_{1/(2\sqrt{2}|q|)}\left(\Delta_1',\Delta_2'\right).
$$
Clearly,
$$
V_D\left(\Delta_1',\Delta_2'\right)\ll \left(1+\frac{\Delta_1'}{D}\right)\left(1+\frac{\Delta_2'}{D}\right). 
$$ 
Putting everything together, we obtain \eqref{plug}. Further, \eqref{also} holds
because $0<\mathcal{N}(n)\le |q|^2/(8C^2)$ implies
\begin{equation} 
\left|\left|n\theta\right|\right|=\left|\left| \frac{na}{q}+ n\gamma\right|\right| \ge
\left|\left| \frac{na}{q}\right|\right| - |n|\cdot |\gamma|\ge \frac{1}{\sqrt{2}|q|}- C\cdot \frac{\sqrt{|q|^2/(8C^2)}}{|q|^2}=\frac{1}{\sqrt{8}|q|}. 
\end{equation} 

\section{Final estimations of the sums $E_i$ and $F_i$} \label{finest}
Recall the conditions $H_1\ge 1$ and $H_2\ge 1/2$. Combining \eqref{F}, \eqref{lin}, \eqref{Gc} and \eqref{firstcase}, we get
\begin{equation*} 
\begin{split}
& F_3(H_1,H_2,K,K')^2 \\
\ll & (H_1H_2x)^{\varepsilon}\cdot \left(H_1^2H_2^2xK+ H_1H_2xK^{1/2}\cdot 
\left(1+\frac{\left(H_1^2+H_2^2\right)x/K}{|q|^2}\right)\left(K^{1/2}+|q|^2\right)\right),
\end{split}
\end{equation*}
where we use the facts that the number $\tau(n)$ of divisors $j$ of $n$ is $O\left(\mathcal{N}(n)^{\varepsilon}\right)$ and that the number of solutions 
$(n_1,n_2)$ with $x/(2K')<\mathcal{N}(n_1),\mathcal{N}(n_2)\le x/K$ of the equation $n/j=n_1-n_2$ is $O(x/K)$. 
Multiplying out and taking square root yields
\begin{equation} \label{F3part} 
\begin{split}
F_3(H_1,H_2,K,K')
\ll & (H_1H_2x)^{\varepsilon}\cdot \Big(H_1H_2(xK)^{1/2}+ (H_1H_2)^{1/2} \times\\
& \left((H_1+H_2)x|q|^{-1}+(H_1+H_2)x K^{-1/4}+|q| x^{1/2}K^{1/4}\right)\Big).
\end{split}
\end{equation}
Recall the definition of $F_3(H_1,H_2)$ in \eqref{F3}. From \eqref{F3part}, we conclude that
\begin{equation} \label{F3est} 
\begin{split}
F_3(H_1,H_2)
\ll & (H_1H_2x)^{\varepsilon}\cdot \Big(H_1H_2x^{(1+\alpha+\beta)/2}+ (H_1H_2)^{1/2} \times\\
& \left((H_1+H_2)x|q|^{-1}+(H_1+H_2)x^{1-\alpha/4}+|q|x^{1/2+(\alpha+\beta)/4}\right)\Big)
\end{split}
\end{equation}
by splitting the summation range of $\mathcal{N}(m)$ into $O(\log 2x)$ dyadic intervals $(K,K']$. 

We also split $E_3(H_1,H_2)$, defined in \eqref{Edef}, into $O(\log 2M)$ parts
\begin{equation*} 
E_3(H_1,H_2,K,K') := \sum\limits_{\substack{j\not=0\\ |\Re(j)|\le H_1\\ |\Im(j)|\le H_2}} \sum\limits_{K<\mathcal{N}(m)\le K'}
\Big| \sum\limits_{x/(2\mathcal{N}(m))<N(n)\le x/\mathcal{N}(m)} e\left(\Im(jmn\theta)\right)\Big|
\end{equation*}
with $1/2\le K<K'\le 2K$, which, using \eqref{lin}, \eqref{Gc} and \eqref{firstcase}, we estimate by
\begin{equation}
\begin{split}
& E_3(H_1,H_2,K,K')\\ \ll & \left(x/K\right)^{1/2}\cdot
\sum\limits_{\substack{j\not=0\\ |\Re(j)|\le H_1\\ |\Im(j)|\le H_2}} \sum\limits_{K<\mathcal{N}(m)\le K'}
\min\left\{||\Re(jm\theta)||^{-1},\sqrt{x/K}\right\}^{1/2}\cdot  \min\left\{||\Im(jm\theta)||^{-1},\sqrt{x/K}\right\}^{1/2}\\
\ll & x^{\varepsilon} \cdot \left(x/K\right)^{1/2}\cdot \sum\limits_{0<\mathcal{N}(l)\le (H_1^2+H_2^2)K'}
\min\left\{||\Re(l\theta)||^{-1},\sqrt{x/K}\right\}^{1/2}\cdot \min\left\{||\Im(l\theta)||^{-1},\sqrt{x/K}\right\}^{1/2}\\
\ll & x^{\varepsilon}\cdot \left(x/K\right)^{1/2} \cdot 
\left(1+\frac{(H_1^2+H_2^2)K}{|q|^2}\right)\left(\left(x/K\right)^{1/2}+|q|^2\right)\\
\ll & x^{\varepsilon} \cdot
\left(xK^{-1}+(H_1^2+H_2^2)x|q|^{-2}+(H_1^2+H_2^2)x^{1/2}K^{1/2}+|q|^2x^{1/2}K^{-1/2}\right).
\end{split}
\end{equation}
If $(H_1^2+H_2^2)K'\le |q|^2/(8C^2)$, then using \eqref{secondcase} instead of \eqref{firstcase}, we obtain
\begin{equation}
\begin{split}
E_3(H_1,H_2,K,K') \ll (x|q|)^{\varepsilon}\left(|q|x^{3/4}K^{-3/4}+|q|^2x^{1/2}K^{-1/2}\right).
\end{split}
\end{equation}
We deduce that for all $K\ge 1/2$, 
\begin{equation}
\begin{split}
& E_3(H_1,H_2,K,K')\ll (x|q|)^{\varepsilon} \times\\ &  \left((H_1^2+H_2^2)x|q|^{-2}+(H_1^2+H_2^2)x^{1/2}K^{1/2}+|q|x^{3/4}K^{-3/4}+
|q|^2x^{1/2}K^{-1/2}\right)
\end{split}
\end{equation}
which implies
\begin{equation} \label{E3end}
\begin{split}
& E_3(H_1,H_2)\ll (x|q|)^{\varepsilon}\times\\
& \left((H_1^2+H_2^2)x|q|^{-2}+(H_1^2+H_2^2)x^{1/2}M^{1/2}+|q|x^{3/4}+|q|^2x^{1/2}\right).
\end{split}
\end{equation}

Now, from \eqref{E3rel} and \eqref{E3end}, we obtain
\begin{equation} \label{E3ende}
E_3\ll (Jx|q|)^{\varepsilon} \cdot \left(\delta^2 J^2x|q|^{-2}+\delta^2 J^2x^{1/2}M^{1/2}+\delta^2|q|x^{3/4}+\delta^2|q|^2x^{1/2}\right),
\end{equation}
where we use the inequality
$$
\min\{\delta,H_1^{-1}\}\cdot \min\{\delta,H_2^{-1}\} \le \delta^2,
$$
and from \eqref{F3rel} and \eqref{F3est}, we obtain
\begin{equation} \label{F3ende}
F_3 \ll (Jx)^{\varepsilon}\left(x^{(1+\alpha+\beta)/2}+
\delta Jx|q|^{-1}+\delta J x^{1-\alpha/4}+\delta |q| x^{1/2+(\alpha+\beta)/4}\right),
\end{equation}
where we use the inequalities
$$
\min\{\delta,H_1^{-1}\}\cdot \min\{\delta,H_2^{-1}\} \le (H_1H_2)^{-1}
$$
and
$$
\min\{\delta,H_1^{-1}\}\cdot \min\{\delta,H_2^{-1}\} \le \delta(H_1H_2)^{-1/2}
$$
(the first for the diagonal, the second for the non-diagonal contribution).

Further, from \eqref{E1E2}, \eqref{E1H}, \eqref{E1E3} and \eqref{E3end}, we infer
\begin{equation} \label{E1E2ende}
E_1, E_2\ll (Jx|q|)^{\varepsilon} \cdot \left(\delta J^2x|q|^{-2}+\delta J^2x^{1/2}M^{1/2}+\delta|q|x^{3/4}+\delta |q|^2x^{1/2}\right), 
\end{equation}
where we use the inequality
$$
\min\{\delta,H^{-1}\}\le \delta,
$$
and from \eqref{F1F2}, \eqref{F1H}, \eqref{F1F3} and \eqref{F3ende}, we infer
\begin{equation} \label{F1F2ende}
\begin{split}
F_1, F_2\ll (Jx|q|)^{\varepsilon}\cdot
\left(x^{(1+\alpha+\beta)/2}+
\delta^{1/2} Jx|q|^{-1}+\delta^{1/2} J x^{1-\alpha/4}+\delta^{1/2} |q| x^{1/2+(\alpha+\beta)/4}\right),
\end{split}
\end{equation}
where we use the inequalities
$$
\min\{\delta,H^{-1}\}\le H^{-1}\quad \mbox{and} \quad \min\{\delta,H^{-1}\}\le \delta^{1/2}H^{-1/2}.
$$

Combining \eqref{typeI}, \eqref{E3ende} and \eqref{E1E2ende}, we obtain
\begin{equation} \label{typeIest}
\begin{split}
& \sum\limits_{\substack{\mathcal{N}(m)\le M\\ mn\in A}} a_m = 4\delta^2 \sum\limits_{\substack{\mathcal{N}(m)\le M\\ mn\in B}} a_m+\\ &
O\left((Jx|q|)^{\varepsilon}\cdot\left(\delta x J^{-1}+\delta^2 J^2x|q|^{-2}+\delta^2 J^2x^{1/2}M^{1/2}+
\delta^2|q|x^{3/4}+\delta^2|q|^2x^{1/2}\right)\right), 
\end{split}
\end{equation}
and combining \eqref{typeII}, \eqref{F3ende} and \eqref{F1F2ende}, we obtain
\begin{equation} \label{typeIIest}
\begin{split}
& \sum\limits_{\substack{x^{\alpha}<\mathcal{N}(m)\le x^{\alpha+\beta}\\ mn\in A}} a_mb_n = 
4\delta^2 \sum\limits_{\substack{x^{\alpha}<\mathcal{N}(m)\le x^{\alpha+\beta}\\ mn\in B}} a_mb_n+ \\ &
O\left((Jx)^{\varepsilon}\cdot \left(\delta xJ^{-1}+x^{(1+\alpha+\beta)/2}+
\delta Jx|q|^{-1}+\delta J x^{1-\alpha/4}+\delta |q| x^{1/2+(\alpha+\beta)/4}\right)\right). 
\end{split}
\end{equation}

Now we choose $J:=[\delta^{-1}x^{3\varepsilon}]$, $x:=|q|^{12}$ (and hence $|q|:=x^{1/12}$), $M= x^{2/3}$, 
$\alpha:=1/3$ and $\beta:=1/2$ so that
\begin{equation} \label{typeIesti}
\sum\limits_{\substack{\mathcal{N}(m)\le M\\ mn\in A}} a_m = 4\delta^2 \sum\limits_{\substack{\mathcal{N}(m)\le M\\ mn\in B}} a_m+
O\left(\delta^2x^{1-\varepsilon}+x^{5/6+8\varepsilon}\right) 
\end{equation}
and 
\begin{equation} \label{typeIIesti}
\begin{split}
\sum\limits_{\substack{x^{\alpha}<\mathcal{N}(m)\le x^{\alpha+\beta}\\ mn\in A}} a_mb_n = 
4\delta^2 \sum\limits_{\substack{x^{\alpha}<\mathcal{N}(m)\le x^{\alpha+\beta}\\ mn\in B}} a_mb_n+ O\left(\delta^2x^{1-\varepsilon}+x^{11/12+8\varepsilon}\right).
\end{split}
\end{equation}

\section{Conclusion} \label{con}
Having proved \eqref{typeIesti} and \eqref{typeIIesti}, we deduce that
\eqref{t1} and \eqref{t2} hold with $Y=\delta^2x^{1-\varepsilon}$ if $\delta\ge x^{-1/24+5\varepsilon}$. 
Now using Theorem \ref{Harsie}, \eqref{primenumber}, \eqref{Sxd} and \eqref{Sx}, it follows that
$$
\sum\limits_{\substack{x/2<\mathcal{N}(p)\le x\\ ||n\theta ||< \delta}} 1 =
4\delta^2 (1+o(1)) \cdot \sum\limits_{\substack{x/2<\mathcal{N}(p)\le x}} 1,
$$
provided that $x=|q|^{12}$, where $a/q$ is a Hurwitz continued fraction approximant of $\theta$ and
$\delta\ge x^{-1/24+\varepsilon}$ for any fixed $\varepsilon>0$. So by taking $N_k=|q_k|^{12}$, where $q_k$ is the $k$-th Hurwitz continued fraction denominator for $\theta$, 
we have the following result.

\begin{theorem} \label{approx}
Let $\theta$ be a complex number such that $\theta\not\in \mathbb{Q}(i)$ and $\varepsilon>0$ be an arbitrary 
constant. Then there exists an infinite increasing sequence of natural numbers $(N_k)_{k\in \mathbb{N}}$ such that 
\begin{equation*}
\sum\limits_{\substack{N_k/2<\mathcal{N}(p)\le N_k\\ ||p\theta ||\le\delta_k}} 1 \sim
4\delta_k^2 \cdot \sum\limits_{\substack{N_k/2<\mathcal{N}(p)\le N_k}} 1 \quad \mbox{as } k\rightarrow \infty
\end{equation*}
if $N_k^{-1/24+\varepsilon}\le \delta_k\le 1/2$. 
\end{theorem}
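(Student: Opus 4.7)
The plan is to apply Theorem \ref{Harsie} to the sets $A$ and $B$ of Section 3, setting $N_k := |q_k|^{12}$ where $q_k$ runs over the Hurwitz continued fraction denominators of $\theta$. All of the substantive bilinear estimates have already been assembled in \eqref{typeIesti} and \eqref{typeIIesti}, so the task reduces to choosing a common error term $Y$ for which both hypotheses \eqref{t1} and \eqref{t2} of the sieve are satisfied, and then translating its conclusion \eqref{super} into the asymptotic claimed.

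Concretely, I would take $\lambda := 4\delta_k^2$, $Y := \delta_k^2 N_k^{1-\varepsilon}$, and the parameters $M = N_k^{2/3}$, $\alpha = 1/3$, $\beta = 1/2$ as fixed in Section \ref{finest}. The type I error in \eqref{typeIesti} is $O(\delta_k^2 N_k^{1-\varepsilon} + N_k^{5/6+8\varepsilon})$; the second term is absorbed by $Y$ whenever $\delta_k \gg N_k^{-1/12+O(\varepsilon)}$, a mild condition. The binding constraint comes from the type II error $O(\delta_k^2 N_k^{1-\varepsilon} + N_k^{11/12+8\varepsilon})$ in \eqref{typeIIesti}: its second term is absorbed by $Y$ precisely when $\delta_k \ge N_k^{-1/24+5\varepsilon}$, which is the hypothesis of Theorem \ref{approx}. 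Under this condition, both \eqref{t1} and \eqref{t2} hold with the common $\lambda$ and $Y$.

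Applying Theorem \ref{Harsie} then yields
\begin{equation*}
\mathcal{S}(A, \mathbb{P}, N_k^{1/2}) = 4\delta_k^2 \, \mathcal{S}(B, \mathbb{P}, N_k^{1/2}) + O\!\left(\delta_k^2 N_k^{1-\varepsilon/2}\right),
\end{equation*}
which by \eqref{Sxd}--\eqref{Sx} reads $S(N_k, \delta_k) = 4\delta_k^2 S(N_k) + O(\delta_k^2 N_k^{1-\varepsilon/2})$. The Gaussian prime number theorem \eqref{primenumber} gives $S(N_k) \asymp N_k/\log N_k$, so the remainder is of lower order than $\delta_k^2 S(N_k)$ throughout the admissible range, which is exactly the asymptotic of Theorem \ref{approx}.

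The only remaining point is to verify that the Hurwitz expansion of $\theta \notin \mathbb{Q}(i)$ genuinely yields infinitely many denominators with $|q_k| \to \infty$, so that $N_k = |q_k|^{12}$ is an infinite strictly increasing sequence of positive integers; this is standard from \cite{Hur}. I expect no serious obstacle at this final assembly stage, since all of the analytic work (the counting bounds \eqref{plug}--\eqref{also}, the dyadic decomposition of $G_\theta$, and the Cauchy--Schwarz reduction of type II to type I in Section \ref{treat}) has already been completed. The whole conclusion is a matter of stringing together the error balancing, the sieve axioms, and the prime number theorem.
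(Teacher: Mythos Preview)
Your proposal is correct and follows essentially the same route as the paper: choose $N_k=|q_k|^{12}$ from the Hurwitz expansion, take $\lambda=4\delta_k^2$, $Y=\delta_k^2 N_k^{1-\varepsilon}$ with $M=N_k^{2/3}$, $\alpha=1/3$, $\beta=1/2$, verify that \eqref{typeIesti} and \eqref{typeIIesti} give \eqref{t1}--\eqref{t2} once $\delta_k\ge N_k^{-1/24+5\varepsilon}$, and then feed \eqref{Sxd}, \eqref{Sx}, \eqref{primenumber} into Theorem~\ref{Harsie}. Your explicit identification of the type~II term $N_k^{11/12+8\varepsilon}$ as the binding constraint is exactly the paper's reasoning, only spelled out in more detail.
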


From this, we deduce the following.

\begin{corollary} \label{coro}
Let $\theta$ be a complex number such that $\theta\not\in \mathbb{Q}(i)$ and $\varepsilon>0$ be an arbitrary 
constant. Then there exist infinitely many Gaussian primes such that
\begin{equation*} 
||p\theta|| \le \mathcal{N}(p)^{-1/24+\varepsilon}.
\end{equation*}
\end{corollary}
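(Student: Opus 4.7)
The plan is to derive the corollary as a direct consequence of Theorem \ref{approx}, which supplies the asymptotic formula on a specific increasing sequence $(N_k)$. Since the corollary only asserts existence of infinitely many Gaussian primes, it suffices to show that for each $k$ (at least for all $k$ sufficiently large) the left-hand side of the asymptotic in Theorem \ref{approx} is nonzero when $\delta_k$ is chosen at the boundary of the admissible range.

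Concretely, I would fix $\varepsilon > 0$ and set $\delta_k := N_k^{-1/24 + \varepsilon}$, which lies in the permitted range $[N_k^{-1/24+\varepsilon}, 1/2]$ once $k$ is large enough. Plugging this into Theorem \ref{approx} gives
\[
\sum_{\substack{N_k/2 < \mathcal{N}(p) \le N_k \\ ||p\theta|| \le \delta_k}} 1 \sim 4\delta_k^2 \cdot S(N_k).
\]
Combined with the Gaussian prime number theorem \eqref{primenumber}, the right-hand side is of order $N_k^{11/12 + 2\varepsilon}/\log N_k$, which tends to infinity. Hence, for every sufficiently large $k$, there exists at least one Gaussian prime $p$ with $N_k/2 < \mathcal{N}(p) \le N_k$ and $||p\theta|| \le \delta_k$.

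For such a prime $p$, the conclusion $||p\theta|| \le \mathcal{N}(p)^{-1/24 + \varepsilon}$ follows from the trivial monotonicity observation: since $\mathcal{N}(p) \le N_k$ and the exponent $-1/24 + \varepsilon$ is negative (for $\varepsilon$ small), one has
\[
\delta_k = N_k^{-1/24 + \varepsilon} \le \mathcal{N}(p)^{-1/24 + \varepsilon}.
\]
Finally, since $\mathcal{N}(p) > N_k/2 \to \infty$, the primes produced by distinct values of $k$ cannot all coincide, so the construction yields infinitely many Gaussian primes satisfying the desired inequality.

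There is no real obstacle here; this is essentially a bookkeeping step extracting an existential statement from an asymptotic one. The only things to verify carefully are that $-1/24 + \varepsilon < 0$ (so the monotonicity argument has the correct sign) and that the $\delta_k$ chosen is genuinely in the range of Theorem \ref{approx}, both of which hold for $\varepsilon$ sufficiently small and $k$ sufficiently large.
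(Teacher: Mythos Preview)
Your proposal is correct and is exactly the intended deduction: the paper itself does not spell out a proof of Corollary~\ref{coro} but simply writes ``From this, we deduce the following,'' meaning it is obtained from Theorem~\ref{approx} precisely by choosing $\delta_k = N_k^{-1/24+\varepsilon}$ and reading off the existence of primes from the fact that the right-hand side tends to infinity. Your added remarks about the monotonicity step and the restriction to small $\varepsilon$ (the case $\varepsilon \ge 1/24$ being trivial) are the standard justifications one would give if writing the details out in full.
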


\section{Notes}
(I) The bound \eqref{lin} for linear exponential sums over $\mathbb{Z}[i]$ was obtained in a very simple way by reduction to one-dimensional 
linear exponential sums. Certainly, refinements are possible under certain conditions, and this may be useful for other applications. However,
it seems that improvements of \eqref{lin} and the subsequent bounds for averages of linear exponential sums don't help in this context
because the terms that dominate here cannot be removed (for example, the term $\frac{z}{|q|^2}\cdot y^{1/2}$ in \eqref{firstcase}). 
So improvements of \eqref{lin}
will most likely not lead to progress with regard to the problem considered in this paper.\\ 

(II) It is not difficult to work out versions of our main results in which the Gaussian primes $p$ under consideration are restricted to 
fixed sectors (i.e., the argument of $p$ is restricted to some interval). All steps in the proof 
remain the same, but additional summation conditions
need to be included, which make the occuring terms even more complicated. 
We have not done this in the present note in order to keep things simple. In the author's subsequent preprint \cite{Bai}, such a more general result
is provided and the necessary alterations in its proof are indicated. \\ 

(III) It is possible to improve the exponent 1/24 in Corollary \ref{coro} using lower bound sieves.
To obtain further imrovements and improve this exponent in Theorem \ref{approx} as well, different
techniques (like bounds for Koosterman-type sums) will be required. This may be an interesting
line of future research.\\

(IV) Another interesting line could be to investigate Diophantine approximation problems of
this type for general number fields.

\end{document}